\documentclass[11pt,reqno]{amsart}
\usepackage{amssymb}
\usepackage{upgreek}
\usepackage{dsfont }
\usepackage{mathrsfs}
\usepackage{mathtools}
\usepackage[all]{xy}
\usepackage{color}
\usepackage{verbatim}
\usepackage[utf8]{inputenc}
\usepackage[T1]{fontenc}
\usepackage{amsmath}
\usepackage{amsfonts}
\usepackage[version=4]{mhchem}
\usepackage{stmaryrd}

\allowdisplaybreaks

\newtheorem{thm}{Theorem}[section]
\newtheorem{lem}[thm]{Lemma}

\theoremstyle{definition}

\newtheorem{ex}[thm]{Example}

\theoremstyle{remark}
\newtheorem{remark}[thm]{Remark}

\newtheorem{assumption}[thm]{Assumption}

\newcommand{\af}{\alpha}
\newcommand{\bt}{\beta}

\newcommand{\ep}{\varepsilon}

\newcommand{\om}{\omega}

\newcommand{\R}{{\mathbb{R}}}

\newcommand{\Hess}{\mathrm{Hess}}
\newcommand{\nab}{\nabla}
\newcommand{\pr}{\partial}

\begin{document}

\title[Monotonicity formulas and Hessian of the Green function]
{Monotonicity formulas and Hessian of the Green function}

\author[Jiewon Park]{Jiewon Park}
\address{
Department of Mathematical Sciences\\
Korea Advanced Institute of Science and Technology (KAIST)\\
291 Daehak-ro, Yuseong-gu, Daejeon 34141\\
South Korea} 
\email{jiewonpark@kaist.ac.kr}

\begin{abstract}   
Based on an assumption on the Hessian of the Green function, we derive some monotonicity formulas on nonparabolic manifolds. This assumption is satisfied on manifolds that meet certain conditions including bounds on the sectional curvature and covariant derivative of the Ricci curvature, as shown in the author's previous work \cite{P}. We also give explicit examples of warped product manifolds on which this assumption holds.
\end{abstract} 

\maketitle

\section{Introduction}

Numerous monotonicity formulas involving the Green function have been discovered in recent years, leading to new breakthroughs and novel proofs of significant theorems. An important example is the monotonicity of various functionals introduced by Colding in \cite{C} for nonnegative Ricci curvature, modeled on the area and volume functionals on the Euclidean space. These formulas were then extended by Colding-Minicozzi in \cite{CM2} to a 1-parameter family of monotonicity formulas. Such monotonicity formulas are extremely useful in that they provide tremendous geometric insights and implications. The aim of this paper is to provide more examples of them for a certain class of manifolds.

We recall that a Riemannian manifold is said to be {\it nonparabolic} if it admits a positive Green function for the Laplacian. For manifolds with Ricci curvature bounded below, faster-than-quadratic volume growth is a strong enough condition to yield nonparabolicity \cite{V}. 

Let $(M,g)$ be a complete nonparabolic Riemannian manifold of dimension $n \geq 3$ with nonnegative Ricci curvature. Fixing $p\in M$, we normalize the minimal positive Green function $G$ so that $\Delta G(p,\cdot) =-n(n-2) \om_n \delta_p$  where $\om_n$ is the volume of the unit $n$-ball. This normalization is such that $G(x,y) = |x-y|^{2-n}$ on the Euclidean space $\R^n$. Also, as in \cite{C} define the function $b:M\to [0, \infty)$ by $$b(x) = G(p,x)^{\frac{1}{2-n}}.$$ 

\begin{thm} \label{CM_monotonicity}
\cite[Corollary 2.21]{C} Define $A(r)$ by $$A(r) = r^{1-n} \int_{b=r} |\nab b|^3.$$ 

Then the following monotonicity formula holds.
$$A'(r)= -\frac{r^{n-3}}2\int_{b \geq r} \left(\left|\Hess_{b^2}-\frac{\Delta b^2}{n}g\right|^2 + \text{Ric}(\nabla b^2, \nabla b^2)\right) b^{2-2n}\, dV \leq 0.$$
\end{thm}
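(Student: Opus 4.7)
The plan is to derive the formula via Bochner's identity applied to $u=b^2$, combined with an algebraic identity that exposes the weighted integrand as an exact divergence.

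First I would record the basic identity that $G=b^{2-n}$ being harmonic translates to $b\Delta b=(n-1)|\nabla b|^2$, equivalently
$$\Delta b^2=2n|\nabla b|^2.$$
Next, working directly from the definition, I would compute $A'(r)$ by writing $F(r)=\int_{b=r}|\nabla b|^3\,d\sigma$ as the flux of $|\nabla b|^2\nabla b$ across $\{b=r\}$ and differentiating via the divergence theorem and co-area formula. Using the harmonicity identity above, the two terms involving $r^{-n}F(r)$ that arise from $\frac{d}{dr}(r^{1-n}F(r))$ cancel exactly, leaving the clean expression
$$A'(r) = 2r^{1-n}\int_{b=r}\frac{\Hess_b(\nabla b,\nabla b)}{|\nabla b|}\,d\sigma.$$

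In parallel, I would rewrite the claimed right-hand side. Set $Q:=|\Hess_{b^2}-\tfrac{\Delta b^2}{n}g|^2+\text{Ric}(\nabla b^2,\nabla b^2)$. Bochner applied to $b^2$ expresses $Q$ as $\tfrac12\Delta|\nabla b^2|^2-\langle\nabla\Delta b^2,\nabla b^2\rangle-\tfrac{(\Delta b^2)^2}{n}$; substituting $\Delta b^2=2n|\nabla b|^2$ and $|\nabla b^2|^2=4b^2|\nabla b|^2$ and expanding, the $|\nabla b|^4$ contributions cancel and one obtains
$$Q = 2b^2\Delta|\nabla b|^2-4(n-2)b\langle\nabla b,\nabla|\nabla b|^2\rangle.$$
The crucial observation is that the weight $b^{2-2n}$ converts $Q$ into an exact divergence:
$$Q\cdot b^{2-2n}=2\,\text{div}\!\left(b^{4-2n}\nabla|\nabla b|^2\right),$$
which is immediate from the product rule, since $\nabla b^{4-2n}=-2(n-2)b^{3-2n}\nabla b$ reproduces exactly the two terms above.

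To conclude I would integrate this divergence identity over $\{b\geq r\}$ and apply Stokes. With outward unit normal $-\nabla b/|\nabla b|$ on $\{b=r\}$ and using $\langle\nabla|\nabla b|^2,\nabla b\rangle=2\Hess_b(\nabla b,\nabla b)$, the boundary contribution at $\{b=r\}$, multiplied by the required factor $-r^{n-3}/2$, produces exactly the expression for $A'(r)$ from the first step. Nonpositivity is then immediate from $Q\geq 0$ under $\text{Ric}\geq 0$.

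The step I expect to be delicate is justifying that the boundary contribution at infinity in the divergence theorem vanishes, equivalently that the weighted volume integral converges. Under $\text{Ric}\geq 0$, Cheng--Yau gradient estimates bound $|\nabla b|$ and $|\nabla^2 b|$ while Bishop--Gromov controls $\text{Vol}(\{b=R\})$; a standard cutoff argument with $\eta(b/R)$ and passage to the limit $R\to\infty$ should then justify the divergence theorem on the unbounded region.
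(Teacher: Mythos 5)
This theorem is quoted from Colding \cite{C} and the paper gives no proof of it, so there is nothing internal to compare against; judged on its own terms, your argument is essentially Colding's original one and the algebra all checks out. The identity $A'(r)=2r^{1-n}\int_{b=r}\Hess_b(\nab b,\nab b)/|\nab b|$ (the $(n-1)$-terms cancel exactly as you say, using $b\Delta b=(n-1)|\nab b|^2$), the Bochner computation giving $Q=2b^2\Delta|\nab b|^2-4(n-2)b\langle\nab b,\nab|\nab b|^2\rangle$, and the observation that $b^{2-2n}Q=2\,\mathrm{div}\!\left(b^{4-2n}\nab|\nab b|^2\right)$ are all correct; the boundary term at $\{b=r\}$ then reproduces $A'(r)$ with the stated normalization.

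The one place your proposal is off is the justification of the vanishing boundary contribution at infinity. Cheng--Yau controls $|\nab b|$ but does \emph{not} give a pointwise bound on $|\nab^2 b|$ under $\mathrm{Ric}\geq 0$ alone, so the cutoff argument as you describe it does not close. The standard repair avoids any Hessian bound: apply your divergence identity on the compact annulus $\{r_1\leq b\leq r_2\}$ to get
$$\int_{r_1\leq b\leq r_2} Q\,b^{2-2n} \;=\; 2r_2^{3-n}A'(r_2)-2r_1^{3-n}A'(r_1),$$
so $r\mapsto 2r^{3-n}A'(r)$ is nondecreasing since $Q\geq 0$. If its limit $L$ as $r\to\infty$ were nonzero, then $A'(r)$ would eventually have a fixed sign with $|A'(r)|\gtrsim r^{n-3}$, forcing $A(r)\to\pm\infty$; but $0\leq A(r)\leq r^{1-n}\int_{b=r}|\nab b|$, which is a dimensional constant by the flux identity for $G$ together with $|\nab b|\leq 1$. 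Hence $L=0$, which simultaneously yields the convergence of the improper integral and the exact equality in the theorem. With that substitution your proof is complete.
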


In addition to \cite{C, CM2} mentioned above, more monotonicity formulas were discovered by Agostiniani-Fogagnolo-Mazzieri \cite{AFM} also for nonnegative Ricci curvature. Many examples exist in other contexts as well. For bounded scalar curvature in dimension three, monotone quantities regarding the Green function were found by Agostiniani-Mazzieri-Oronzio \cite{AMO} and Munteanu-Wang \cite{MW, MW2}. The latter was extended to the $p$-Laplacian by Chan-Chu-Lee-Tsang \cite{CCLT}. Agostiniani-Mazzieri \cite{AM} found monotone quantities for asymptotically flat static metrics arising in general relativity.

These formulas have seen a number of important applications, including a new proof of the positive mass theorem \cite{AMO}, geometric inequalities such as Willmore-type inequalities on manifolds with nonnegative Ricci curvature \cite{AFM}, and extended Minkowski inequalities \cite{AFM2} by the means of effective monotonicity formulas for the $p$-capacitary potential on $\R^n$ minus a compact set. The monotonicity formula in \cite{CM2} implies that the level sets of the Green function are asymptotically umbilic. The monotone quantity $A(r)$ in Theorem \ref{CM_monotonicity} was used in \cite{CM3} to establish the uniqueness of tangent cone in the presence of a smooth cross section. The monotonicity formulas of \cite{MW, MW2} were applied by Chodosh-Li \cite{CL} to address Bernstein-type problems for minimal surfaces. 

While Theorem \ref{CM_monotonicity} is proved using integration by parts techniques and Bochner formula, another way to obtain monotonicity is via a pointwise Hessian estimate, often referred to as a matrix Li-Yau-Hamilton inequality. Monotonicity formulas obtained this way include, for example, the parabolic frequency  of Poon \cite{Po}. For more examples and a treatise of the general principle, we refer the reader to the comprehensive survey by Ni \cite{N2}. 

Therefore we introduce the following assumption on the Hessian of $G$, which we call Assumption ($\ast$).

\begin{assumption} \label{ass:ast}
A complete, noncompact, nonparabolic Riemannian manifold $(M,g)$ is said to satisfy Assumption ($\ast$) if there exists some $C \geq 0$ such that $\Hess_{b^2} \leq Cg$.
\end{assumption}     

Some remarks about Assumption $(\ast)$ are in order. Here and henceforth we denote the distance from $p$ by $r$. We first note that the Euclidean space $\R^n$ with $n\geq 3$ satisfies Assumption ($\ast$) with $C=2$. 

 In general, $\Hess_{b^2} \leq Cg$ always holds true for some sufficiently large $C$ on a small neighborhood of $p$. For instance \cite[Theorem 2.4]{MRS} implies that $\displaystyle \left|\Hess_G + n(2-n)r^{-n} dr^2 -(2-n)r^{-n}g\right| = o(1)$, which implies that for any $C$ sufficiently large, $\Hess_{b^2} \leq Cg$ on a small neighborhood of $p$. On the other extreme as $r \to \infty$, in general on noncompact manifolds little is known about the second-order asymptotics of $G$ although there are related works on Euclidean spaces \cite[Lemma 2.3]{Pog}. In \cite{P}, the author has shown that under curvature bounds and an assumption at infinity, Assumption ($\ast$) is satisfied on all of the manifold with $C$ determined to be the same as that of a small neighborhood of $p$.

\begin{thm} \cite[Theorem 1.1]{P} \label{thm:Hessest}
Suppose that $M$ has parallel Ricci curvature and $Rm(\nab b,V,\nab b, V)\geq 0$ for any vector field $V$ on $M$, where $Rm$ is the Riemannian curvature tensor. Suppose that $\Hess_{b^2} \leq C_1 g$ on a neighborhood of $p$ and $\Hess_{b^2} \leq C_2 g$ outside of a compact set for constants $C_1, C_2$, where $C_1\geq 10$ and $C_2$ can be arbitrarily larger than $C_1$. Then $\Hess_{b^2} \leq C_1 g$ on $M\backslash\{p\}$. \end{thm}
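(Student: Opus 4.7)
The plan is a pointwise maximum principle applied to the largest eigenvalue of $\Hess_{b^2}$, treating the local hypothesis near $p$ and the far-field hypothesis as barrier data. Setting $u := b^2$, so that $\Delta u = 2n|\nab b|^2$ (a consequence of $\Delta G = 0$ applied to $G = b^{2-n}$), the main analytic tool is the Bochner-Weitzenb\"ock identity
$$\Delta(\Hess_u)_{ij} = (\Hess_{\Delta u})_{ij} + 2 R_{ikjl}(\Hess_u)^{kl} - R_{ik}(\Hess_u)^k{}_j - R_{jk}(\Hess_u)^k{}_i + (\nab_i R_{jk} + \nab_j R_{ik} - \nab_k R_{ij})\nab^k u.$$
The parallel Ricci hypothesis $\nab \mathrm{Ric} = 0$ annihilates the last group, while $\Hess_{\Delta u} = 2n\,\Hess_{|\nab b|^2}$ can be expanded through a second Bochner identity, producing a $|\Hess_u|^2$ term of definite sign together with further curvature contractions.

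Contracting this identity against $V\otimes V$ for a unit eigenvector $V$ realizing $\lambda(x) := \lambda_{\max}(\Hess_u(x))$ yields, in the viscosity sense, a pointwise differential inequality of the form $\Delta\lambda \geq \Phi(\lambda,|\nab b|^2) + (\text{controlled curvature terms})$. Here the hypothesis $Rm(\nab b, V, \nab b, V) \geq 0$ controls the Riemann contraction involving $V$, while the standing nonnegativity of Ricci controls the trace terms $R_{ik}(\Hess_u)^k{}_j$. The aim is to verify that $\Phi > 0$ whenever $\lambda > C_1$, with the threshold $C_1 \geq 10$ arising precisely so that the leading $|\Hess_u|^2$ piece from $\Hess_{|\nab b|^2}$ dominates the remaining indefinite contributions. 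Granting this, suppose for contradiction that $\Lambda := \sup_{M\setminus\{p\}}\lambda > C_1$. The local hypothesis rules out this supremum being approached near $p$, and the far-field hypothesis gives $\Lambda \leq C_2 < \infty$. An interior realized supremum is incompatible with $\Delta\lambda > 0$ there; to rule out a supremum approached only at infinity I would invoke the Omori-Yau maximum principle, which holds on any complete manifold of nonnegative Ricci curvature, extracting a sequence $x_k$ along which $\lambda(x_k) \to \Lambda$, $|\nab\lambda(x_k)| \to 0$, and $\limsup\Delta\lambda(x_k) \leq 0$, again contradicting $\Phi(\Lambda,\cdot) > 0$.

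The principal obstacle will be the algebra of expanding $\Hess_{|\nab b|^2}$ without losing signs: one relies on Colding's gradient estimate $|\nab b| \leq 1$ on manifolds of nonnegative Ricci curvature to bound intermediate quantities, and must absorb the remaining indefinite contributions into the leading $|\Hess_u|^2$. It is this absorption step that dictates the numerical threshold $C_1 \geq 10$; any slack in the estimates either enlarges the threshold or leaves uncontrolled terms. A secondary technical point is the non-smoothness of $\lambda$ at eigenvalue crossings, which is handled either via viscosity solutions or, more directly, by applying Hamilton's tensor maximum principle to the symmetric 2-tensor $P := \Hess_u - C_1 g$ with its null-eigenvector condition, the required input for which is precisely the Bochner computation above.
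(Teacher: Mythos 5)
This theorem is not proved in the present paper at all: it is imported verbatim from the author's earlier work \cite{P} and used here only as a source of examples satisfying Assumption ($\ast$). So there is no internal proof to measure you against; what follows compares your sketch with the method of the cited source, which is indeed a tensor maximum principle built on a Bochner-type computation for $\Hess_{b^2}$. At that level of resolution your outline is aimed in the right direction: commuting derivatives to express $\Delta\Hess_u$, using parallel Ricci to kill the $\nabla\mathrm{Ric}$ terms, using $\Delta u = 2n|\nab b|^2$ to turn $\Hess_{\Delta u}$ into $2n\,\Hess_{|\nab b|^2}$ (which produces a nonnegative $(\Hess_b)^2$ term plus a drift term $\nabla_{\nab b}\Hess_b$ plus curvature), and closing with a null-eigenvector/largest-eigenvalue argument. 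One correction of attribution: the hypothesis $Rm(\nab b,V,\nab b,V)\geq 0$ is tailored to the curvature terms of the form $Rm(V,\nab b,V,\nab b)$ that arise when commuting the third derivatives of $b$ inside $\Hess_{|\nab b|^2}$, not to the full contraction $R_{ikjl}(\Hess_u)^{kl}$ in the Hessian Bochner identity, which is a genuinely different tensor contraction and is not controlled by that hypothesis alone.

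The proposal has two genuine gaps. First, the entire quantitative content of the theorem --- expanding $\Hess_{|\nab b|^2}$, relating $\Hess_b(V,V)$ back to $\lambda=\lambda_{\max}(\Hess_{b^2})$ through $\Hess_{b^2}=2b\,\Hess_b+2\,db\otimes db$, and verifying that the resulting $\Phi(\lambda,|\nab b|^2)$ is strictly positive whenever $\lambda>C_1$ with the specific threshold $C_1\geq 10$ --- is exactly the step you defer as ``the principal obstacle.'' Since the theorem is false without some lower bound on $C_1$, a proof that does not carry out this absorption has not proved anything; it has restated the difficulty. Second, your treatment of a supremum approached only at infinity does not close. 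The differential inequality one actually obtains degenerates at infinity: the favorable quadratic term inherited from $(\Hess_b)^2$ carries factors of $b^{-2}$ (and the other coefficients involve $|\nab b|^2$, which need not be bounded below), so along an Omori--Yau sequence the conclusion $\limsup\Delta\lambda(x_k)\leq 0$ is not in tension with $\Phi\to 0$. The exterior hypothesis $\Hess_{b^2}\leq C_2 g$ with $C_2\gg C_1$ gives only $\Lambda\leq C_2$ and does not prevent $\Lambda\in(C_1,C_2]$ from being attained only at infinity; its actual role in \cite{P} is to permit a localization (a barrier or exhaustion added to $\Hess_{b^2}-C_1 g$ that is negative near $p$ and dominates $C_2-C_1$ far out), forcing the first zero eigenvector to occur at an interior point where Hamilton's null-eigenvector argument applies. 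You would need to either supply such a barrier or prove a non-degenerate version of the differential inequality before Omori--Yau could substitute for it.
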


The proof of Theorem \ref{thm:Hessest} readily generalizes to the situation where $|Rm|\leq Kr^{-2}$ and $|\nab \mathrm{Ric}|\leq Lr^{-3}$ for some constants $K, L>0$, in which case the resulting bound on $\Hess_{b^2}$ also depends on not only $C_1$ but also $K, L$. We also note that Assumption ($\ast$) can be shown to hold on Einstein manifolds, as discussed in \cite[Theorem 4.1]{CM3}.

Based on Assumption ($\ast$), in this paper we prove two types of monotonicity formulas. The first family of monotonicity formulas regards the gradient flow of $b^2$. It has the advantage of being very flexible in the choice of the domain of integration. 

\begin{thm} \label{thm1}
Suppose that Assumption ($\ast$) holds true. Let $\Phi_t$ be the gradient flow of $b^2$. Given a compact smooth domain $B$ and constants $\alpha, \beta$ such that $\bt \geq 0$ and $\af + \bt \geq n$, 
$$\frac{d}{dt} \left(e^{-C\bt t} \int_{\Phi_{t}(B)} b^{-\alpha}|\nabla b|^{\beta} \right)\leq 0.$$ 

If equality holds at some $t$ and $p \in \Phi_t(B)$, then $M$ is isometric to the Euclidean space $\R^n$.
\end{thm}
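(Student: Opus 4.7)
The strategy is a direct computation via the transport identity for the gradient flow of $b^2$. Writing $X = \nab b^2$ and $f = b^{-\af}|\nab b|^\bt$, we have
$$\frac{d}{dt}\int_{\Phi_t(B)} f\, dV = \int_{\Phi_t(B)} \bigl(X(f) + f\,\mathrm{div}\,X\bigr)\, dV,$$
so the theorem reduces to the pointwise inequality $X(f) + f\,\mathrm{div}\,X \leq C\bt\, f$ on $M\setminus\{p\}$, after which the $e^{-C\bt t}$ factor absorbs the right-hand side.

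For the divergence, since $G = b^{2-n}$ is harmonic on $M\setminus\{p\}$, a chain rule computation gives $\Delta b = (n-1)|\nab b|^2/b$, hence $\mathrm{div}\,X = \Delta b^2 = 2n |\nab b|^2$. For the directional derivative I use $\nab |\nab b|^\bt = \bt |\nab b|^{\bt-2}\Hess_b(\nab b,\cdot)$, pair with $\nab b^2 = 2b\nab b$, and convert to $\Hess_{b^2}$ via the identity $\Hess_{b^2} = 2b\Hess_b + 2\, db\otimes db$. Adding the two contributions the cross terms of order $b^{-\af}|\nab b|^{\bt+2}$ collapse to give
$$X(f) + f\,\mathrm{div}\,X \;=\; 2(n-\af-\bt)\,b^{-\af}|\nab b|^{\bt+2} \;+\; \bt\, b^{-\af}|\nab b|^{\bt-2}\,\Hess_{b^2}(\nab b,\nab b).$$
Here is where the hypotheses are each used exactly once: $\bt\geq 0$ allows me to estimate the second term by $C\bt f$ using Assumption $(\ast)$, and $\af + \bt \geq n$ makes the first term nonpositive. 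The monotonicity now follows by a one-line integration.

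For the rigidity, equality at some time $t$ forces both
$(n-\af-\bt)\,b^{-\af}|\nab b|^{\bt+2} \equiv 0$ and $\bt\, b^{-\af}|\nab b|^{\bt-2}\bigl(C|\nab b|^2 - \Hess_{b^2}(\nab b,\nab b)\bigr)\equiv 0$ on $\Phi_t(B)$. Since $b$ is nonconstant and elliptic-regular, $|\nab b|$ cannot vanish on any open set, so the second identity requires $\bt > 0$ and $\Hess_{b^2}(\nab b,\nab b) = C|\nab b|^2$ throughout $\Phi_t(B)$. The hypothesis $p\in \Phi_t(B)$ together with the near-$p$ asymptotics $\Hess_{b^2}\to 2g$ recalled in the introduction then pins down $C = 2$. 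A strong maximum principle applied to the nonnegative scalar $C|\nab b|^2 - \Hess_{b^2}(\nab b,\nab b)$, in the spirit of the propagation arguments in \cite{P, CM3}, extends this pointwise equality to all of $M\setminus\{p\}$, and the resulting identity $\Hess_{b^2} = 2g$ identifies $(M,g)$ with $\R^n$ by the standard rigidity for functions with Euclidean Hessian.

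The main obstacle I anticipate is the rigidity step, specifically propagating the pointwise equality from an open set to the whole manifold and then extracting the isometry with $\R^n$; the monotonicity inequality itself is essentially bookkeeping, provided one expresses all derivatives of $b$ in terms of $\Hess_{b^2}$ so that Assumption $(\ast)$ enters in a single clean place. A secondary technical point is integrability of $f$ near $p$, which is not automatic but is available under the hypothesis $\af+\bt\geq n$ with $\bt>0$ whenever $\af < n$, and otherwise should be handled by requiring $p\notin \Phi_t(B)$.
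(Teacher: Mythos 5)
Your derivation of the monotonicity inequality is correct and is essentially the paper's argument: the transport identity for the flow of $\nab b^2$, the divergence $\Delta b^2=2n|\nab b|^2$, and the conversion of $\langle\nab|\nab b|^2,\nab b^2\rangle$ into $2\Hess_{b^2}(\nab b,\nab b)-4|\nab b|^4$ (Lemma \ref{lem_nab_grad}) yield exactly your pointwise identity, with $\bt\geq 0$ and $\af+\bt\geq n$ entering in the same two places. Your side remark about integrability of $b^{-\af}|\nab b|^{\bt}$ near $p$ is a fair point that the paper passes over (it implicitly restricts to $\af\leq n$).

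The genuine gap is in the rigidity step. From equality you correctly extract $\Hess_{b^2}(\nab b,\nab b)=C|\nab b|^2$ on $\Phi_t(B)$ for $\bt>0$, but the proposed propagation of this identity to all of $M\setminus\{p\}$ via a ``strong maximum principle'' for $C|\nab b|^2-\Hess_{b^2}(\nab b,\nab b)$ is not available here: this scalar is not known to satisfy any elliptic differential inequality under the standing hypotheses (nonnegative Ricci plus Assumption $(\ast)$), and the propagation arguments of \cite{P} that you appeal to require parallel Ricci curvature and a sign on $Rm(\nab b,V,\nab b,V)$, neither of which is assumed in Theorem \ref{thm1}. Moreover, even granting propagation, the directional equality only forces $(Cg-\Hess_{b^2})(\nab b,\cdot)=0$, not the full tensor identity $\Hess_{b^2}=2g$ that your final appeal to Euclidean-Hessian rigidity needs. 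The paper avoids all of this: from the Hessian equality on $\Phi_t(B)$ one gets (taking the trace) that $|\nab b|$ is constant there, the asymptotics $|\nab b|\to 1$ at $p\in\Phi_t(B)$ then give $|\nab b|\equiv 1$ on $\Phi_t(B)$, and the rigidity clause of Colding's gradient estimate (Theorem \ref{thm:gradest}: $|\nab b|\leq 1$ with equality at any single point forcing $M\cong\R^n$) finishes immediately --- no global propagation is required. You already have the needed ingredients (the identification $C=2$ from the asymptotics at $p$, and the ODE $\frac{d}{dt}(|\nab b|^2\circ\Phi_t)=2C|\nab b|^2-4|\nab b|^4$ along flow lines emanating from $p$, which forces $|\nab b|\equiv 1$ on $\Phi_t(B)$); replacing the maximum-principle step with an appeal to Theorem \ref{thm:gradest} closes the argument.
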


Theorem \ref{thm1} seems to be the optimal form of the following somewhat more general theorem.

\begin{thm} \label{thm2}
Suppose that Assumption ($\ast$) holds true. For any compact smooth domain $B$ and constants $\af, \bt$ such that $\bt \geq 0$ and $\af + \bt \geq n$, and any differentiable function $f:\R \to \R$ satisfying $\displaystyle \frac{f'(t)}{f(t)} \leq -C\beta$, the following monotonicity formula holds true.
$$\frac{d}{dt} \left(f(t) \int_{\Phi_{t}(B)} b^{-\alpha}|\nabla b|^{\beta} \right)\leq 0.$$ If equality holds at some $t$ and $p \in \Phi_t(B)$, then $M$ is isometric to the Euclidean space $\R^n$.
\end{thm}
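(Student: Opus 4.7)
The plan is to compute the time derivative of $I(t):=\int_{\Phi_t(B)}b^{-\af}|\nab b|^{\bt}\,dV$ and bound it by $C\bt\, I(t)$, after which the hypothesis on $f$ closes the argument. Write $F:=b^{-\af}|\nab b|^{\bt}$ and $X:=\nab b^2$ for the generator of $\Phi_t$. The standard transport identity gives $I'(t)=\int_{\Phi_t(B)}\operatorname{div}(FX)\,dV$, so everything reduces to the pointwise bound $\operatorname{div}(FX)\leq C\bt\, F$. Once that is in hand,
\[ (fI)'(t)=f'(t)I(t)+f(t)I'(t)\leq \bigl(f'(t)+C\bt f(t)\bigr)I(t)\leq 0, \]
using $f'/f\leq -C\bt$ together with $f>0$ and $I\geq 0$.

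For the pointwise bound I would first carry out the Leibniz expansion of $\operatorname{div}\bigl(2b^{1-\af}|\nab b|^{\bt}\nab b\bigr)$. The identity $b\,\Delta b=(n-1)|\nab b|^2$, which follows from the harmonicity of $G=b^{2-n}$ on $M\setminus\{p\}$, produces the exact formula
\[ \operatorname{div}(FX)=2(n-\af)\,b^{-\af}|\nab b|^{\bt+2}+2\bt\,b^{1-\af}|\nab b|^{\bt-2}\Hess_b(\nab b,\nab b). \]
The algebraic identity $\Hess_{b^2}=2b\Hess_b+2\,db\otimes db$ then turns Assumption $(\ast)$ into the estimate $2b\,\Hess_b(\nab b,\nab b)\leq C|\nab b|^2-2|\nab b|^4$. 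Substituting and using $\bt\geq 0$ gives
\[ \operatorname{div}(FX)\leq 2(n-\af-\bt)\,b^{-\af}|\nab b|^{\bt+2}+C\bt\, F, \]
and since $\af+\bt\geq n$ the first term on the right is nonpositive.

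The subtle point is the rigidity statement. If $(fI)'(t_0)=0$ with $I(t_0)>0$, tracing the chain backwards forces $\operatorname{div}(FX)=C\bt\, F$ pointwise on $\Phi_{t_0}(B)$; in the nondegenerate case $\bt>0$ this demands both $(\af+\bt-n)|\nab b|^{\bt+2}=0$ and $\Hess_{b^2}(\nab b,\nab b)=C|\nab b|^2$ on $\{|\nab b|>0\}\cap \Phi_{t_0}(B)$, so that $\nab b$ realises the extremal eigenvalue $C$ of $\Hess_{b^2}$. Letting a point approach $p\in \Phi_{t_0}(B)$ and invoking the known second-order asymptotic $\Hess_{b^2}(p)=2g(p)$ (discussed just after Theorem~\ref{thm:Hessest}) pins down $C=2$. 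The main obstacle I anticipate is the final step: lifting this one-directional pointwise equality to the full tensor identity $\Hess_{b^2}=2g$ on $M$, after which the standard characterisation of $\R^n$ via a proper function whose Hessian equals twice the metric applies. I expect this step to mirror, or directly borrow from, the rigidity argument of \cite[Theorem 4.1]{CM3}, possibly propagating the pointwise equality along integral curves of $X$ and using Assumption $(\ast)$ in combination with $\Delta b^2=2n|\nab b|^2$ to control the remaining eigenvalues.
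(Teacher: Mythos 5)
Your monotonicity computation is correct and is in substance identical to the paper's: expanding $\operatorname{div}(F\nab b^2)=F\,\Delta b^2+\langle\nab F,\nab b^2\rangle$ reproduces exactly the paper's two terms (the term $I$ coming from $\Delta b^2=2n|\nab b|^2$ and the term $II$ coming from Lemma \ref{lem_nab_grad} together with Assumption $(\ast)$), and your bound $\operatorname{div}(FX)\le 2(n-\af-\bt)\,b^{-\af}|\nab b|^{\bt+2}+C\bt F$ is precisely the paper's inequality $\frac{d}{dt}V_\bt\le\left(\bt C+\frac{f'}{f}\right)V_\bt+2(n-\af-\bt)V_{\bt+2}$ after multiplying by $f$. (As in the paper, you do need $f>0$, which you note; this is implicit in the hypothesis $f'/f\le -C\bt$ as the paper intends it.)

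The genuine gap is in the rigidity statement, and you have located it but proposed a repair that is both harder than necessary and not the one the paper uses. You do not need to determine $C=2$, nor to upgrade the one-directional equality to the full tensor identity $\Hess_{b^2}=2g$ and then invoke a splitting-type characterization of $\R^n$; no such Hessian rigidity theorem is ever used. The paper's route is: equality in the key pointwise estimate gives $\Hess_{b^2}=Cg$ on $\Phi_t(B)$; taking the trace and using $\Delta b^2=2n|\nab b|^2$ shows $|\nab b|^2=C/2$ is \emph{constant} on $\Phi_t(B)$; since $p\in\Phi_t(B)$ and $|\nab b|\to 1$ at $p$, that constant is $1$; and then the rigidity clause of Theorem \ref{thm:gradest} (equality in Colding's gradient estimate $|\nab b|\le 1$ at any point forces $M\cong\R^n$) concludes. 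All the rigidity is carried by the gradient estimate, not by the Hessian. (A caveat that applies to your write-up and, in fairness, is also glossed over in the paper: pointwise equality in the derivative bound a priori only yields $\Hess_{b^2}(\nab b,\nab b)=C|\nab b|^2$ in the single direction $\nab b$. But even this weaker information says $u=|\nab b|^2$ satisfies the autonomous ODE $\dot u=2Cu-4u^2$ along the flow lines, and since every flow line emanates from $p$ where $u\to 1$ and $u\le 1$ throughout, one is again forced to $u\equiv 1$ and hence to Theorem \ref{thm:gradest}. Either way, the step you flagged as the main obstacle is closed by the gradient estimate, not by a matrix rigidity argument.)
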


We additionally derive another monotonicity formula, which seems closer in spirit with \cite{C, CM2, AFM, MW} where the monotonicity formulas hold along level sets or sublevel sets.

\begin{thm} \label{thm3}
Under Assumption ($\ast$), we have
$$\frac{d}{dr}\left(r^{2-n}\int_{b\leq r}|\nab b|^4 - \frac{C|\mathbb{S}^{n-1}|}{2n} r^2 \right) \leq 0.$$ If equality holds at some $r>0$, then $M$ is isometric to the Euclidean space $\R^n$.
\end{thm}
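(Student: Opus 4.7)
The plan is to compute the derivative directly via coarea, reduce it via a divergence identity to a manageable integrand, and then apply Assumption~($\ast$) against an exact Green-function normalization identity.

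First, the coarea formula gives $\tfrac{d}{dr}\int_{b\leq r}|\nab b|^4 = \int_{b=r}|\nab b|^3\,d\sigma$. The next step is to establish the pointwise divergence identity
\[
\mathrm{div}(|\nab b|^2 \nab b^2) = 2\Hess_{b^2}(\nab b,\nab b) + 2(n-2)|\nab b|^4,
\]
which follows from the product rule combined with $\Hess_{b^2} = 2b\Hess_b + 2\,db\otimes db$ and the consequence $\Delta b^2 = 2n|\nab b|^2$ of $\Delta G = 0$ away from $p$. Integrating over $\Omega_r = \{b\leq r\}$ via the divergence theorem, one gets $r\int_{b=r}|\nab b|^3\,d\sigma = \int_{\Omega_r}\Hess_{b^2}(\nab b,\nab b)\,dV + (n-2)\int_{\Omega_r}|\nab b|^4$; substituting this back, the two $|\nab b|^4$ volume integrals cancel and one obtains the compact formula
\[
\frac{d}{dr}\Bigl(r^{2-n}\int_{b\leq r}|\nab b|^4\Bigr) = r^{1-n}\int_{\Omega_r}\Hess_{b^2}(\nab b,\nab b)\,dV.
\]

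The main auxiliary input is the exact identity $\int_{\Omega_r}|\nab b|^2\,dV = \tfrac{|\mathbb{S}^{n-1}|}{n}r^n$. I would derive this by first combining $\Delta b^2 = 2n|\nab b|^2$ with the divergence theorem to get $\int_{\Omega_r}|\nab b|^2 = \tfrac{r}{n}\int_{b=r}|\nab b|\,d\sigma$, then using the well-known normalization identity $\int_{b=r}|\nab b|\,d\sigma = |\mathbb{S}^{n-1}|r^{n-1}$ (obtained by applying the divergence theorem to $\nab G = (2-n)b^{1-n}\nab b$ on $\Omega_r\setminus B_\varepsilon(p)$ and letting $\varepsilon\to 0$, so that the normalization $\Delta G = -n(n-2)\om_n\delta_p$ furnishes the right-hand side). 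With Assumption~($\ast$) providing the pointwise bound $\Hess_{b^2}(\nab b,\nab b) \leq C|\nab b|^2$, integration yields
\[
\int_{\Omega_r}\Hess_{b^2}(\nab b, \nab b)\,dV \leq C\int_{\Omega_r}|\nab b|^2 dV = \frac{C|\mathbb{S}^{n-1}|}{n}r^n,
\]
which matches $r^{n-1}\tfrac{d}{dr}\bigl(\tfrac{C|\mathbb{S}^{n-1}|}{2n}r^2\bigr)$ exactly, yielding the monotonicity.

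For rigidity, equality at some $r_0$ forces $\Hess_{b^2}(\nab b,\nab b) = C|\nab b|^2$ almost everywhere on $\Omega_{r_0}$. The asymptotic $\Hess_{b^2}(x)\to 2g$ as $x\to p$ (recalled in the introduction from~\cite{MRS}) pins down $C = 2$, so $\nab b$ becomes an eigenvector of $\Hess_{b^2}$ with the maximal eigenvalue $2$ throughout $\Omega_{r_0}$; propagating this overdetermined condition to full Euclidean rigidity would be done by combining it with the Colding--Minicozzi framework of \cite{CM3}. The main technical obstacle I anticipate is the careful treatment of the $\delta_p$ contribution when establishing the Green-function normalization identity; once that is in place, the monotonicity itself follows in essentially one line. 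The rigidity is the more delicate step, since one only has a scalar-valued equality along the single direction $\nab b$ and must leverage auxiliary structural results to conclude $M\cong\R^n$.
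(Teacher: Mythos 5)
Your derivation of the monotonicity itself is correct and is essentially the paper's argument, just reorganized: the paper integrates $\langle \nab |\nab b|^2, \nab b^2\rangle$ by parts over $\{b\leq r\}$, applies Lemma \ref{lem_nab_grad} together with $\Hess_{b^2}\leq Cg$, and then uses the normalization identity $\int_{b\leq r}|\nab b|^2 = \frac{|\mathbb{S}^{n-1}|}{n}r^n$ to reach \eqref{eqn_thm3}. Your packaging of the same computation into the exact identity $\frac{d}{dr}\bigl(r^{2-n}\int_{b\leq r}|\nab b|^4\bigr) = r^{1-n}\int_{b\leq r}\Hess_{b^2}(\nab b,\nab b)$ before invoking Assumption ($\ast$) is a clean presentation of the same content, and your divergence identity for $|\nab b|^2\nab b^2$ is exactly Lemma \ref{lem_nab_grad} plus $\Delta b^2 = 2n|\nab b|^2$. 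No issues there.

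The rigidity statement is where your proposal has a genuine gap. You correctly observe that equality forces $\Hess_{b^2}(\nab b,\nab b) = C|\nab b|^2$ on $\{b\leq r_0\}$ and that the asymptotics at $p$ then pin down $C=2$, but you then defer the conclusion to ``the Colding--Minicozzi framework of \cite{CM3},'' which is about uniqueness of tangent cones and does not by itself deliver $M\cong\R^n$ here; as written this is not an argument. The paper closes the loop much more elementarily: equality gives $\Hess_{b^2}=Cg$ on the sublevel set, whose trace combined with $\Delta b^2 = 2n|\nab b|^2$ forces $|\nab b|$ to be constant there; since $|\nab b|\to 1$ at $p$ and $p$ lies in the region, $|\nab b|\equiv 1$, and the equality case of Colding's gradient estimate $|\nab b|\leq 1$ (Theorem \ref{thm:gradest}) yields the isometry with $\R^n$. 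If you want to work only with the directional equality you actually obtain (rather than the full tensor equality), you can still recover $|\nab b|\equiv 1$ by feeding $\Hess_{b^2}(\nab b,\nab b) = C|\nab b|^2$ into Lemma \ref{lem_nab_grad}, which turns $u=|\nab b|^2$ into a solution of $\dot u = 2Cu-4u^2$ along the gradient flow of $b^2$ with $u\to 1$ at $p$; with $C=2$ the only such trajectory is $u\equiv 1$. Either way, the final step should be the rigidity case of Theorem \ref{thm:gradest}, not \cite{CM3}.
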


This paper is organized as follows. In Section 2 we gather some preliminaries and prove Theorems \ref{thm1} and \ref{thm2}. The proof of Theorem \ref{thm3} is presented in Section 3. Some demonstrative examples are given in Section 4, where we explicitly verify Assumption $(\ast)$ on some warped products.

\section{Proof of the first monotonicity formula}

Let us first record some estimates and identities needed in the proof. Firstly, $\Delta G = 0$ is equivalent to $\Delta b^2 = 2n|\nab b|^2$. 

The following gradient estimate for $b$ was proved in \cite{C}.

\begin{thm}{\cite[Theorem 3.1]{C}} \label{thm:gradest}
$|\nab b| \leq 1$ on $M$. If equality holds at any point on $M$, then $M$ is isometric to the Euclidean space $\R^n$.
\end{thm}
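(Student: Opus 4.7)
The plan is to apply the Bochner formula to $\phi := |\nab b|^2$, exploit the structural identity coming from the harmonicity of $G = b^{2-n}$, and conclude via a drift-elliptic comparison argument that $\phi \le 1$ on $M \setminus \{p\}$.

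First, since $\Delta G = 0$ on $M\setminus\{p\}$, direct differentiation yields $\Delta b = (n-1)\phi/b$. Applying Bochner to $b$ and substituting gives
$$
\tfrac12\Delta \phi = |\Hess_b|^2 + \frac{(n-1)\la \nab b, \nab\phi\ra}{b} - \frac{(n-1)\phi^2}{b^2} + \mathrm{Ric}(\nab b, \nab b).
$$
Next, I would extract a sharp lower bound on $|\Hess_b|^2$ by splitting the Hessian into its radial entry $\Hess_b(e_1,e_1) = \frac{\la\nab\phi,\nab b\ra}{2\phi}$ (with $e_1 = \nab b/|\nab b|$) and its $(n-1)$-dimensional tangential part, then applying Cauchy-Schwarz to the tangential trace $\Delta b - \Hess_b(e_1,e_1)$. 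Combined with $\mathrm{Ric}\ge 0$, the algebra simplifies cleanly to
$$
\tfrac12\Delta\phi \ge \frac{n}{n-1}\,\Hess_b(e_1,e_1)^2 + \frac{(n-2)\la\nab b,\nab\phi\ra}{b}.
$$
Equivalently, setting $\mathcal{L}u := \Delta u - \frac{2(n-2)}{b}\la\nab b, \nab u\ra$, one has $\mathcal{L}\phi \ge 0$ on $M\setminus\{p\}$.

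The third step is to deduce $\phi \le 1$ by comparison. Near $p$, the Euclidean-type asymptotics of the Green function give $\phi(x) \to 1$ as $x \to p$ at a polynomial rate in $b$. To handle infinity, I would construct a one-parameter family of explicit supersolutions $\psi_{\varepsilon,\alpha} := 1 + \varepsilon\, b^\alpha$ with $\varepsilon > 0$ small and $\alpha \in (0, n-2)$ small; a direct computation using $\Delta b = (n-1)\phi/b$ yields $\mathcal{L} b^\alpha = -\alpha(n-2-\alpha)\,b^{\alpha-2}\phi < 0$, so $\mathcal{L}\psi_{\varepsilon,\alpha} < 0$. A Cheng--Yau-type gradient estimate applied to the positive harmonic function $G$ ensures that $\phi$ is globally bounded on $M\setminus\{p\}$, so that for $\delta$ small and $R$ large the boundary inequality $\phi < \psi_{\varepsilon,\alpha}$ holds on $\partial\{\delta \le b \le R\}$. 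The weak maximum principle for $\mathcal{L}$ (valid on this annular domain, where the drift coefficient is smooth) then gives $\phi \le \psi_{\varepsilon,\alpha}$ throughout, and letting $R \to \infty$, $\delta \to 0$, and finally $\varepsilon \to 0$ at a fixed point of $M$ yields $\phi \le 1$.

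For rigidity, if $\phi(q) = 1$ at some $q \in M\setminus\{p\}$, the strong maximum principle for $\mathcal{L}$ forces $\phi \equiv 1$ on $M\setminus\{p\}$. Equality in all of the previous estimates then gives $\mathrm{Ric}(\nab b, \nab b) \equiv 0$, $\Hess_b(e_1,e_1) \equiv 0$, and $\Hess_b = b^{-1}(g - db\otimes db)$, so $b$ is a distance function whose level sets are totally umbilic round spheres of radius $b$. The resulting warped-product structure, combined with smoothness at $p$ (which forces the cross-section to be the round $\mathbb{S}^{n-1}$), identifies $M$ isometrically with $\R^n$. The main obstacle is the infinity argument: the pointwise Bochner step is algebraically clean, but bounding $\phi$ globally and engineering a barrier that dominates $\phi$ at both ends of the annulus are where the noncompactness of $M$ and the nonnegativity of Ricci curvature are both used essentially.
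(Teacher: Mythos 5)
The paper does not actually prove this statement---it is quoted directly from Colding \cite[Theorem 3.1]{C}---so there is no internal proof to compare against; your argument is, however, a correct reconstruction of essentially Colding's original proof: the Bochner formula together with the trace decomposition of $\Hess_b$ (using $\Delta b=(n-1)|\nab b|^2/b$ and $\Hess_b(e_1,e_1)=\langle\nab|\nab b|^2,\nab b\rangle/(2|\nab b|^2)$) gives subharmonicity of $|\nab b|^2$ for the drift operator $\mathcal{L}=\Delta-2(n-2)b^{-1}\langle\nab b,\nab\cdot\rangle$, and the estimate follows from the maximum principle on the annuli $\{\dt\le b\le R\}$ with the barriers $1+\ep b^{\af}$, $0<\af<n-2$. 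The only points you pass over quickly are that the global bound on $|\nab b|$ and the properness of $b$ (compactness of those annuli, and $b\to\infty$ at infinity so the barrier dominates on the outer boundary) require the Li--Yau two-sided bound on $G$ in addition to Cheng--Yau; both are standard under $\mathrm{Ric}\ge 0$, so this is a presentational omission rather than a gap.
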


We will also need the identity from \cite{C} that
 $$\int_{b \leq r} |\nab b|^2 = \frac{|\mathbb{S}^{n-1}|}{n} r^n$$ for any $r>0$, which can be derived using the coarea formula together with the asymptotics that $|\nab b| \to 1$ as one approaches $p$. We will also use the following lemma repeatedly, the proof of which is a straightforward calculation.

\begin{lem} \label{lem_nab_grad}
$\langle \nab |\nab b|^2, \nab b^2 \rangle = 2(\nab^2 b^2)(\nab b, \nab b) - 4|\nab b|^4$.
\end{lem}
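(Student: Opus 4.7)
The plan is to verify this identity by a direct tensor computation, showing that both sides simplify to the single expression $4b\,(\nab^2 b)(\nab b,\nab b)$. The strategy is to use the product rule for the Hessian on the right-hand side and the chain rule $\nab b^2 = 2b\,\nab b$ on the left-hand side, then collect terms.

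First I would handle the right-hand side. Applying the product rule for the Hessian of a squared function gives $\nab^2(b^2) = 2\,db\otimes db + 2b\,\nab^2 b$, so
$$(\nab^2 b^2)(\nab b,\nab b) = 2|\nab b|^4 + 2b\,(\nab^2 b)(\nab b,\nab b).$$
Substituting this into $2(\nab^2 b^2)(\nab b,\nab b) - 4|\nab b|^4$ and cancelling the $|\nab b|^4$ contributions yields $4b\,(\nab^2 b)(\nab b,\nab b)$.

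For the left-hand side, using $\nab b^2 = 2b\,\nab b$, I would write
$$\la \nab|\nab b|^2,\nab b^2\ra = 2b\,\la \nab|\nab b|^2,\nab b\ra = 2b\cdot \nab_{\nab b}|\nab b|^2,$$
and then apply the standard identity $X(|\nab b|^2) = 2(\nab^2 b)(\nab b,X)$ with $X=\nab b$ to obtain $4b\,(\nab^2 b)(\nab b,\nab b)$, matching the right-hand side.

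There is no substantive obstacle, as the author indicates by calling this a straightforward calculation. The only conceptual point worth emphasizing is that the lemma is really the product-rule identity $\nab^2(b^2) = 2\,db\otimes db + 2b\,\nab^2 b$ rearranged so that the result involves $\nab^2 b^2$ rather than $\nab^2 b$. This is precisely the form needed for the monotonicity arguments to follow, since Assumption ($\ast$) supplies a pointwise upper bound on $\nab^2 b^2$ directly.
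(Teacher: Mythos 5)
Your proof is correct and is essentially the same computation as the paper's: the paper works in normal coordinates, writing $(b^2)_{ij}=2bb_{ij}+2b_ib_j$ (your product rule for $\nab^2(b^2)$) and reducing both sides to $4b\,b_{ij}b_ib_j$, exactly your common value $4b\,(\nab^2 b)(\nab b,\nab b)$. The only difference is notational (coordinate-free versus index notation), so there is nothing substantive to add.
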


\begin{proof}
In normal coordinates we get $(b^2)_{ij} = 2bb_{ij} + 2b_ib_j$. Equivalently, $$b_{ij} = \frac{(b^2)_{ij} - 2b_i b_j}{2b}.$$
Therefore,
$$
\begin{aligned}
\langle \nab |\nab b|^2, \nab b^2 \rangle &= ((b_i)^2)_j (b^2)_j = 4b \cdot b_{ij}b_ib_j\\
&=2 \left((b^2)_{ij} - 2b_i b_j\right) b_i b_j  \\
&= 2(\nab^2 b^2)(\nab b, \nab b) - 4|\nab b|^4.
\end{aligned}
$$

\end{proof}

Let us first explain the motivation behind monotonicity formulas like Theorem \ref{thm1} or Theorem \ref{thm2}. For this we observe the model case of $\R^n$ where $b$ is the distance from the origin and $B$ is the unit ball. Then the integral $ \int_{\Phi_{t}(B)} b^{-\alpha}|\nab b|^{\bt}$ is easily tractable since $\Phi_t(x) = e^{2t}x$ and $|\nab b| = 1$.

\begin{equation} \label{int_model}
	\int_{\Phi_{t}(B)} b^{-\alpha}|\nab b|^{\bt}=\int_{\left\{b \leq e^{2 t}\right\}} b^{-\alpha} =\left|\mathbb{S}^{n-1}\right|\int_{0}^{e^{2 t}} s^{n-1-\alpha} \, ds.
\end{equation}

The monotonicity of this quantity itself is immediate, since it is the integral of a positive function over a domain that becomes larger with $t$. However this monotonicity is not very interesting or sharp even in this model case, so we consider $f(t) \int_{\Phi_{t}(B)} b^{-\alpha}|\nabla b|^{\beta}$ for an appropriate decreasing positive function $f: (0,\infty) \to (0,\infty)$. Completing the calculation in (\ref{int_model}), in this model case we have for $\af \neq n$,

$$f(t) \int_{\Phi_{t}(B)} b^{-\alpha}|\nabla b|^{\beta} = f(t)\left|\mathbb{S}^{n-1}\right| \int_{0}^{e^{2 t}} s^{n-1-\alpha} d s \sim \left|\mathbb{S}^{n-1}\right|\frac{e^{2(n-\af)t}f(t)}{n-\af}.$$

Hence an exponentially decaying $f$ (in this case $f(t) = e^{-2(n-\alpha) t}$) is a natural and sharp choice.

\begin{proof}[Proof of Theorem \ref{thm2}]

For each fixed $\alpha \leq n$, denote for $\beta\geq \max \{n-\af, 0\}$,  $$\displaystyle V_{\beta}(t)=f(t) \int_{\Phi_{t}(B)} b^{-\alpha}|\nabla b|^{\beta} \, d\mathcal{H}^n.$$ Equivalently we may write $\displaystyle V_{\bt}(t) = f(t) \int_B (b \,\circ\,\Phi_t)^{-\af} ( |\nab b|^{\bt} \, \circ\, \Phi_t)|\det d\Phi_t|\, d\mathcal{H}^n$. Observe that $V_{\beta}$ is nonincreasing in $\beta$ since $|\nab b| \leq 1$ by Theorem \ref{thm:gradest}. 

We may calculate $dV_{\beta}(t)/dt$ as

$$
\begin{aligned}
\frac{dV_{\beta}(t)}{d t} &=f^{\prime}(t) \int_{\Phi_{t}(B)}b^{-\alpha}|\nabla b|^{\beta}+f(t) \frac{d}{d t} \int_{\Phi_{t}(B)} b^{-\alpha}|\nabla b|^{\beta} \,d\mathcal{H}^n\\
& =\frac{f^{\prime}(t)}{f(t)} V_{\beta}(t) +f(t)\underbrace{\int_{\Phi_t(B)} b^{-\alpha}|\nabla b|^{\beta}\Delta b^2 \,d\mathcal{H}^n}_{I}+f(t)\underbrace{\int_{\Phi_t(B)} \frac{d}{d t}\left(b^{-\alpha}|\nabla b|^{\beta}\right)\, d\mathcal{H}^n}_{II}. \\
\end{aligned}
$$

Let us calculate $I$ and $II$ separately. Since $\Delta b^2 = 2n|\nab b|^2$, 

$$I=2 n \int_{\Phi_{t}(B)}{b^{-\alpha}}|\nabla b|^{\beta+2}.$$

Hence,
$$f(t) I=2 n V_{\beta+2}(t).$$

Now we calculate $II$. To emphasize that we are calculating at points in $\Phi_t(B)$ and not $B$, we write $b$ as $b \circ \Phi_t$, $|\nab b|$ as $|\nab b| \circ \Phi_t$, etc. when it is clearer to do so.

$$
\begin{aligned}
\frac{d}{d t}\left(b \circ \Phi_{t}\right)^{-\alpha}&=-\alpha\left(b \circ\Phi_{t}\right)^{-\alpha-1}\left\langle\nabla b, \nabla b^{2}\right\rangle \\
& =-2 \alpha\left(b \circ \Phi_{t}\right)^{-\alpha}|\nabla b|^{2} \circ \Phi_{t} ,
\end{aligned}
$$
and
$$
\begin{aligned}
\frac{d}{d t}\left(|\nabla b| \circ \Phi_{t}\right)^{\beta}&=\frac{d}{d t}\left(|\nabla b|^{2}\circ \Phi_{t}\right)^{\frac{\beta}{2}} \\
&=\frac{\beta}{2}\left(|\nabla b|^{2} \circ \Phi_{t}\right)^{\frac{\beta}{2}-1}\left\langle\nabla|\nabla b|^{2}, \nabla b^{2}\right\rangle.
\end{aligned}
$$

Applying Lemma \ref{lem_nab_grad} and the assumption that $\bt \ge 0$ and $\nab^2 b^2 \leq Cg$, we conclude that

\begin{align}
\frac{d}{d t}\left(|\nabla b| \circ \Phi_{t}\right)^{\beta} &\leq \beta\left(|\nabla b|^{2} \circ \Phi_{t}\right)^{\frac{\beta}{2}-1}\left[C|\nabla b|^{2}-2|\nabla b|^{4}\right] \circ \Phi_{t} \label{equality} \\
&= \beta C (|\nab b|\circ \Phi_t)^{\beta} - 2\beta (|\nab b|\circ \Phi_t)^{\beta + 2}. \nonumber
\end{align}

Combining, we get at points in $\Phi_t(B)$,
$$
\begin{aligned}
\frac{d}{d t}& \left(b^{-\af} |\nab b|^{\bt}\right) \\
&\leq-2 \alpha b^{-\alpha}|\nabla b|^{\beta + 2} + b^{-\alpha}\left[\beta C|\nab b|^{\beta} - 2\beta |\nab b|^{\beta + 2} \right] \\
&=b^{-\alpha}|\nab b|^{\beta}\left[\beta C-2(\af+\bt) |\nab b|^2\right].
\end{aligned}
$$

Putting everything together, we have
$$
\begin{aligned}
& \frac{d}{d t} V_{\beta}(t)=f(t) I+f(t) II+\frac{f^{\prime}(t)}{f(t)} V_{\beta}(t) \\
& \leq 2nV_{\bt+2}(t) + \bt C V_{\bt}(t) - 2(\af + \bt)V_{\bt+2}(t)+\frac{f^{\prime}(t)}{f(t)} V_{\beta}(t) \\
&=\left(\bt C + \frac{f'(t)}{f(t)}\right)V_{\bt}(t) + 2\left(n-\af - \bt\right)V_{\bt+2}(t).
\end{aligned}
$$
By the assumptions that $\displaystyle \bt C + \frac{f'(t)}{f(t)} \leq 0$ and $n-\af - \bt \leq 0$, the monotonicity formula now follows. 

Equality holds in (\ref{equality}) when $\Hess_{b^2} = Cg$ on $\Phi_t(B)$. Taking the trace, we obtain that $|\nab b|$ is a constant. Since $|\nab b| \to 1$ as one approaches $p$, $|\nab b|\equiv 1$ on $\Phi_t(B)$. Then we can conclude that $M$ is isometric to the Euclidean space $\R^n$ by the condition for equality in Theorem \ref{thm:gradest}.
\end{proof}

\begin{proof}[Proof of Theorem \ref{thm1}] Theorem \ref{thm1} is now immediate by taking $f(t) = e^{-C\bt t}$ in the proof above.
\end{proof}

\section{Proof of the second monotonicity formula}

By the divergence theorem,
\begin{align*}
\int_{b \leq r} \langle \nab |\nab b|^2, \nab b^2 \rangle	&= -\int_{b \leq r} |\nab b|^2\Delta b^2 + \int_{b=r}|\nab b|^2 \langle \frac{\nab b}{|\nab b|}, \nab b^2 \rangle \\
&= -\int_{b \leq r} 2n|\nab b|^4 + \int_{b=r} 2b|\nab b|^3. \\ 
\end{align*}

On the other hand, by Lemma \ref{lem_nab_grad} and since $\nab^2 b^2 \leq Cg$,
$$
\int_{b \leq r} \langle \nab |\nab b|^2, \nab b^2 \rangle \leq 	\int_{b\leq r} 2C|\nab b|^2 - 4\int_{b\leq r}|\nab b|^4.
$$

Combining, we have

\begin{equation} \label{eq1}
\int_{b=r} 2b|\nab b|^3 \leq \int_{b\leq r} 2C|\nab b|^2 +(2n- 4)\int_{b\leq r}|\nab b|^4.	
\end{equation} 

Now let us define
$$V(r) = \int_{b \leq r} |\nab b|^4$$

so that the coarea formula yields
$$V'(r) = \int_{b=r} |\nab b|^3. $$

Then we may rewrite (\ref{eq1}) as

$$r V'(r) \leq \frac{C|\mathbb{S}^{n-1}|}{n} r^n + (n-2)V(r).$$

Here, we used the identity that for any $r > 0$, $\int_{b \leq r} |\nab b|^2 = \frac{|\mathbb{S}^{n-1}|}{n} r^n $. Equivalently, \begin{equation} \label{eqn_thm3}
\left(r^{2-n} V(r) \right)' \leq \frac{C|\mathbb{S}^{n-1}|}{n}r.	
\end{equation}

This yields Theorem \ref{thm3}. Now equality holds at some $r$ if and only if $\Hess_{b^2} = Cg$ on $\{b\leq r\}$. Since $p \in \{b \leq r\}$, by the same reasoning as before we can conclude that $M$ is isometric to $\R^n$.

\begin{remark}
By integrating equation (\ref{eqn_thm3}), we get as a corollary that $V(r) \leq \frac{C|\mathbb{S}^{n-1}|}{2n}r^n$. Although the at-most-$r^n$ growth of $V$ already follows from the definition and Theorem \ref{thm:gradest}, in this expression it becomes clearer how the constant is determined by $\Hess_{b^2}$.
	
\end{remark}

\section{examples}

In this section we investigate two explicit examples on which Assumption ($\ast$) is satisfied.

\begin{ex}[Small perturbation of cones] It is straightforward to check that on a cone $g_C=dr^2 + ar^2 g_N$ where $a>0$ is a constant and the cross section $N$ is an $(n-1)$-dimensional compact Riemannian manifold, the function $r^{n-2}$ is harmonic and satisfies $\Hess_{r^2} =2g$. The Hessian bound persists on a Riemannian manifold that is a sufficiently small $\mathcal{C}^2$ perturbation of such a cone. For instance, if $(M,g) = \left([0, \infty) \times N, dr^2 + f^2 g_{N}\right)$ with $f:M \to [0, \infty)$ satisfying $|\nab^k (g - g_C)| = o(r^{-k+n})$ for $k=0,1,2$ for both $r \to 0$ and $r\to \infty$, then the Green function is uniformly $\mathcal{C}^2$-close to the solution on the cone. Hence Assumption $(\ast)$ is satisfied.
\end{ex}

\begin{ex}[Warped products]
We examine which nonparabolic warped products satisfy Assumption ($\ast$). Consider a metric of the form $dr^2 + f(r)^2 g_N$ where $(N, g_N)$ is a smooth Riemannian manifold. We will conclude that if the warped product is nonpabolic and $f$ has sublinear growth up to second order, then Assumption ($\ast$) holds true, by a calculation which we lay out below (together with the precise meaning of sublinear growth in this context).

 Using the radial expression of the Laplacian, the Green function $G$ on this warped product is a radial function that satisfies \begin{equation} \label{G_eqn}
	\Delta G = \frac{\pr^2}{\pr r^2} G + \frac{n-1}{f(r)} \frac{\pr}{\pr r} G = 0.
\end{equation} 

Let $F(r) = f(r)^2$. For $b(r) = G(r)^{\frac{1}{2-n}}$ as before, let $h: \R \to \R$ be such that $b(r)^2 = h(F(r))$. We denote the derivative with respect to $r$ with the usual prime, and the derivative with respect to $F$ with an upper dot. These functions can be understood both as a function of $r$ or $F$, or a function on $M$ by a composition with $r$. Then $$\nab b^2 = \overset{.}{h}(F) \nab F,$$ and

\begin{equation}\label{eq:Hessb}
\nab^2 b^2 = \overset{..}{h}(F) \, dF \otimes dF + \overset{.}{h}(F)\nab^2 F.	
\end{equation}

Let us calculate the right hand side of (\ref{eq:Hessb}). Since $F$ is radial we have that 
$$dF = F'(r) dr. $$

To calculate $\nab^2 F$, one can use the well-known elementary identity on a warped product that $\nab^2 (\int f \, dr) = f' g$, which translates into
$$\nab^2 F = \left(F'' - 2(f')^2 \right) dr \otimes dr + 2(f')^2g.$$

Now (\ref{eq:Hessb}) becomes
\begin{equation} \label{eqn_Hess_warp}
\nab^2 b^2 = \overset{..}{h}(F)(F')^2 dr \otimes dr +  \overset{.}{h}(F)\left[ \left(F'' - 2(f')^2 \right) dr \otimes dr + 2(f')^2g\right].	
\end{equation}

Therefore, if $|\overset{..}{h}(F)(F')^2+\overset{.}{h}(F) \left(F'' - 2(f')^2 \right)|$ and $|\overset{.}{h}(F)(f')^2|$ are bounded, then the warped product satisfies the desired pointwise Hessian estimate of $b^2$. Before setting out to estimate these two quantities, we note that in the model conical case of $f(r)=r$ and $h(F)=F$, the first term is 0 and the second is 2.

Inserting $G = h(F)^{\frac{2-n}{2}}$ into (\ref{G_eqn}) gives \begin{equation} \label{hF_eqn}
	-\frac{n}{2} \left[\overset{.}{h}(F) F' \right]^2 + h(F) \left[\overset{..}{h}(F) (F')^2 + \frac{n-1}{\sqrt{F}} \overset{.}{h}(F) F' +\overset{.}{h}(F) F'' \right] = 0.
\end{equation}

(\ref{hF_eqn}) is equivalent to 
\begin{equation} \label{hF_eqn2}
	\overset{.}{\left(h^{-n/2}\overset{.}{h}\right)} = -\left(\frac{n-1}{\sqrt{F}F'} +\frac{F''}{(F')^2} \right)h^{-n/2}\overset{.}{h}.
\end{equation}

Integrating (\ref{hF_eqn2}) twice gives that for constants $C_1, C_2$,
\begin{equation} \label{h_calculated}
	h = \left[C_1 \int \exp\left(-\int\left(\frac{n-1}{\sqrt{F}} + \frac{F''}{F'}\right) \, dr \right) \, dF + C_2 \right]^{\frac{2}{2-n}},
\end{equation}

\begin{equation} \label{hdot_calculated}
\overset{.}{h} =\frac{2}{2-n} C_1 h^{n/2}  \exp\left(-\int\left(\frac{n-1}{\sqrt{F}} + \frac{F''}{F'}\right) \, dr \right).
\end{equation}

Rearranging (\ref{hF_eqn}) gives

\begin{equation} \label{hdotdot_calculated}
\overset{..}{h} = \left[\frac{n\overset{.}{h}}{2h}-\left(\frac{n-1}{\sqrt{F}F'} + \frac{F''}{(F')^2} \right) \right] \overset{.}{h}.
\end{equation}

Let us now focus on the case of $f$ with polynomial growth up to second order, that is, where there exist an $\af\in \R$ and some $C_0>0$ so that $$\frac{r^{\af-i}}{C_0} <|\nab^i f| < C_0r^{\af-i} \text{ for all } i=0,1,2.$$ From here on, we also introduce the notations $C_3, C_4, \cdots$ in the coefficients and exponents, which are bounded quantities but not necessarily constant and may vary from line to line. This suffices for our purposes of only tracking the growth of the functions as $r\to \infty$, not their precise values.

Inserting the polynomial growth of $f$ into (\ref{h_calculated}), (\ref{hdot_calculated}), (\ref{hdotdot_calculated}), for $\af \neq 1$ we can bound $h,\overset{.}{h},\overset{..}{h}$ as

$$h \sim \left[C_3 \int r^{C_4} e^{-C_5 r^{1-\af}}\, dr + C_2 \right]^{\frac{2}{2-n}},$$
$$\overset{.}{h} \sim \left[C_3\int r^{C_4} e^{-C_5 r^{1-\af}}\, dr + C_2 \right]^{\frac{n}{2-n}}r^{C_4}e^{-C_5 r^{1-\af}} ,$$
$$\overset{..}{h} \sim \left[ \frac{n}{2h} - \frac{C_6}{r^{3\af-1}}- \frac{C_7}{r^{2\af}}\right] \overset{.}{h}$$
where $C_5, C_6>0$. Now in order to bound $|\overset{..}{h}(F)(F')^2+\overset{.}{h}(F) \left(F'' - 2(f')^2 \right)|$ and $|\overset{.}{h}(F)(f')^2|$ as $r\to \infty$, it suffices to bound  $\overset{..}{h}(F)(F')^2, \overset{.}{h}(F)F''$, and $\overset{.}{h}(F)(f')^2$. For this we only need to bound $\overset{..}{h}r^{4\af-2}$ and $\overset{.}{h}r^{2\af-2}$. If $\af <1$, then the integral $\int r^{C_4} e^{-C_5 r^{1-\af}}\, dr$ is convergent and $e^{-C_5 r^{1-\af}}$ decays exponentially, so $\overset{.}{h}$ and $\overset{.}{h}r^{2\af-2}$ decays exponentially and $h$ is bounded. Therefore $\overset{..}{h}r^{4\af-2}$ also decays exponentially.

We briefly also mention the critical case of $\af = 1$ which is when $f$ has linear growth up to second order. By a similar brute force estimate, we obtain (for $C_i$ not necessarily the same as before)
$$h \sim [C_1r^{-C_3} + C_2]^{\frac{2}{2-n}},$$
$$\overset{.}{h} \sim r^{C_4},$$
$$\overset{..}{h} \sim r^{C_4}$$
for some $C_3, C_4$. It is possible to track the $C_i$'s in a precise manner if $C_0$ is sufficiently close to 1, or equivalently, $F/r^2, F'/r, F''$ are sufficiently close to the model case where $F(r)=r^2$. In that case, for example $\overset{.}{h}$ can be bounded with $$\overset{.}{h} \sim r^{n\left(\frac{(1+\ep)n-2}{n-2} - (1-\ep) \right)}$$ where $\ep \to 0$ as we approach the model conical case. This is not strong enough to guarantee that $\overset{..}{h}r^{4\af-2}$, $\overset{.}{h}r^{2\af-2}$ stay bounded, but at least we can estimate their growth.

To summarize, we have established that on a nonparabolic warped product $dr^2 + f(r)^2 g_N$ with $f$ having sublinear growth up to second order, Assumption $(\ast)$ is satisfied. Moreover, $\nab^2 b^2 \leq H(r) g$ for some function $H$ decaying exponentially in $r$. In proving the exponential decay, we derived equations (\ref{h_calculated}), (\ref{hdot_calculated}), (\ref{hdotdot_calculated}) which together with (\ref{eqn_Hess_warp}) would yield a precise estimate of the Hessian of the Green function on any warped product. 
\end{ex}

\end{document}